\let\mathcal\mathscr
\DeclareRobustCommand{\SkipTocEntry}[5]{}
\def\R{{\bf R}}
\def\llra{\hbox to 10mm{\rightarrowfill}}
\def\lllra{\hbox to 15mm{\rightarrowfill}}
\def\PZ{{\widehat Z}}
\def\phi{{\varphi}}
\def\wK{{\widetilde K}}
\def\wY{{\widetilde Y}}
\def\cO{\mathcal{O}}
\DeclareMathOperator{\Pic}{Pic}
\newtheorem{lemm}{Lemma}[section]
\newtheorem{theo}[lemm]{Theorem}
\newtheorem{prop}[lemm]{Proposition}
\theoremstyle{definition}
\newtheorem{rema}[lemm]{Remark}
\newtheorem{qu}[lemm]{Question}
\theoremstyle{remark}
\newtheorem*{remark*}{Remark}
\newtheorem*{note*}{Note}
\begin{document}
\title{On the Chow ring of certain rational cohomology tori}

\author{Zhi Jiang}
\address{D\'epartement de Math\'ematiques\\CNRS UMR 8628\\Universit\'e Paris-Sud\\B\^atiment 425\\91405 Orsay Cedex\\France}
\email{zhi.jiang@math.u-psud.fr}

\author{Qizheng Yin}
\address{Departement Mathematik\\ETH Z\"urich\\R\"amistrasse 101\\8092 Z\"urich\\Switzerland}
\email{qizheng.yin@math.ethz.ch}

\subjclass[2010]{14C15, 14E20, 14K05}

\keywords{Abelian varieties, abelian covers, cohomology ring, Chow ring}

\thanks{Q.~Y.~was supported by the grant ERC-2012-AdG-320368-MCSK}

\begin{abstract}
Let $f: X\rightarrow A$ be an abelian cover from a complex algebraic variety with quotient singularities to an abelian variety. We show that $f^*$ induces an isomorphism between the rational cohomology rings $H^{\bullet}(A, \mathbb{Q})$ and $H^{\bullet}(X, \mathbb{Q})$ if and only if $f^*$ induces an isomorphism between the Chow rings with rational coefficients $\mathrm{CH}^\bullet(A)_{\mathbb{Q}}$ and  $\mathrm{CH}^\bullet(X)_{\mathbb{Q}}$.
\end{abstract}

\maketitle

\section{Rational cohomology tori and the Chow ring}
A complex algebraic variety $X$ is called a rational cohomology torus if $X$ is normal and
$$H^{\bullet}(X, \mathbb{Q})\simeq \wedge^{\bullet}H^1(X, \mathbb{Q}).$$
In \cite{DJL}, the authors studied properties of rational cohomology tori. They showed that if $X$ is a rational cohomology torus, then there exists a finite cover $a_X: X\rightarrow A$ to an abelian variety such that $a_X^*: H^{\bullet}(A, \mathbb{Q})\rightarrow H^{\bullet}(X, \mathbb{Q})$ is an isomorphism. The morphism $a_X$ is the universal morphism to an abelian variety and is called the Albanese morphism of $X$.

It is a general principle that Hodge structures control Chow groups. Hence it makes sense to ask the following question.

\begin{qu} Let $f: X \rightarrow A$ be a finite morphism from a variety with quotient singularities to an abelian variety.
Assume that $f^*: H^{\bullet}(A, \mathbb{Q})\rightarrow H^{\bullet}(X, \mathbb{Q})$ is an isomorphism. 
Does $f^*$ induce an isomorphism of Chow rings with rational coefficients between $A$ and $X$?
\end{qu}

A variety is said to have quotient singularities if it is \'etale-locally the quotient of a smooth variety by a finite group. For such varieties there is a well-defined intersection theory with $\mathbb{Q}$-coefficients (see \cite{V}). Note that a smooth rational cohomology torus cannot be of general type, but in every dimension $>2$, there exist rational cohomology tori of general type with quotient singularities (see \cite{DJL}). Hence it is more interesting to consider the Chow ring of rational cohomology tori with quotient singularities.

Despite several constraints on $a_X$ found in \cite{DJL}, we do not have a thorough classification of rational cohomology tori. Hence it seems difficult to answer the above question in general. But all known rational cohomology tori are actually abelian covers of abelian varieties (see \cite[Section 4]{DJL}). The goal of this note is to answer affirmatively the above question in the case of abelian covers of abelian varieties.

Let $f: X\rightarrow A$ be an abelian cover from a normal variety to an abelian variety with Galois group $G$. Then we have (see for instance \cite{P})
$$f_*\cO_X=\bigoplus_{\chi \in G^*} L_{\chi}^{-1},$$
where $L_{\chi}$ is a line bundle on $A$ and $G$ acts on $L_{\chi}^{-1}$ via the character $\chi$. Assume further that $X$ has quotient singularities. Define for each character $\chi \in G^*$ the projector
$$[\Gamma_\chi^X] := \frac{1}{|G|}\sum_{g\in G}\chi^{-1}(g)[\Gamma_g^X] \in \mathrm{CH}^{\dim X}(X\times X)_{\mathbb{C}},$$
where $\Gamma^X_g$ is the graph of $g$. The image of $\mathrm{CH}^\bullet(X)_{\mathbb{C}}$ under $[\Gamma_\chi^X]$ is
$$\mathrm{CH}^\bullet(X)_{\mathbb{C}}^{\chi} := \{ \alpha \in \mathrm{CH}^\bullet(X)_{\mathbb{C}} \,:\, g_*(\alpha) = \chi(g) \alpha \, \text{ for all } \, g \in G\}.$$
We have a decomposition
$$\mathrm{CH}^\bullet(X)_{\mathbb{C}} = \bigoplus_{\chi \in G^*} \mathrm{CH}^\bullet(X)_{\mathbb{C}}^{\chi}.$$

\begin{theo}\label{main}
Let $f: X\rightarrow A$ be an abelian cover from a variety with quotient singularities to an abelian variety with Galois group $G$. Then the following statements are equivalent:
\begin{enumerate}
\item $X$ is a rational cohomology torus;
\item $h^i(A, L_{\chi})=0$ for all $i\in \mathbb{Z}$ and non-trivial $\chi \in G^*$;
\item $[\Gamma_\chi^X] = 0 \in \mathrm{CH}^{\dim X}(X\times X)_{\mathbb{C}}$ for all non-trivial $\chi \in G^*$; in particular, 
$$f^*: \mathrm{CH}^\bullet(A)_{\mathbb{Q}}\rightarrow \mathrm{CH}^\bullet(X)_{\mathbb{Q}}$$
is an isomorphism.
\end{enumerate}
\end{theo}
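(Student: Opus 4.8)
The plan is to prove the three conditions equivalent in the cyclic order $(3)\Rightarrow(1)\Rightarrow(2)\Rightarrow(3)$; essentially all of the difficulty is in the last implication.

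For $(3)\Rightarrow(1)$ and the Chow statement: since $[\Gamma_g^X]\circ[\Gamma_h^X]=[\Gamma_{gh}^X]$, the $[\Gamma_\chi^X]$ are orthogonal idempotents with $\sum_\chi[\Gamma_\chi^X]=[\Delta_X]$, and $[\Gamma_\chi^X]$ acts on both $H^\bullet(X,\mathbb{C})$ and $\mathrm{CH}^\bullet(X)_{\mathbb{C}}$ as the projector onto the $\chi$-isotypic part. If $(3)$ holds, then on cohomology $H^\bullet(X,\mathbb{C})=H^\bullet(X,\mathbb{C})^G$; since the transfer for the finite quotient $X\to X/G=A$ identifies $f^*$ with an isomorphism $H^\bullet(A,\mathbb{Q})\isom H^\bullet(X,\mathbb{Q})^G$, we get $H^\bullet(X,\mathbb{Q})\isom f^*\ww^\bullet H^1(A,\mathbb{Q})=\ww^\bullet H^1(X,\mathbb{Q})$, i.e.\ $X$ is a rational cohomology torus. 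The same vanishing gives $\mathrm{CH}^\bullet(X)_{\mathbb{C}}=\mathrm{CH}^\bullet(X)_{\mathbb{C}}^G$; as $f^*$ is split injective by $\tfrac{1}{|G|}f_*$ and $\tfrac1{|G|}f^*f_*=\mathrm{id}$ on $G$-invariants (because $f^*f_*=\sum_{g\in G}g_*$), $f^*\colon\mathrm{CH}^\bullet(A)_{\mathbb{C}}\to\mathrm{CH}^\bullet(X)_{\mathbb{C}}$ is an isomorphism, hence so is $f^*$ on rational Chow rings.

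For $(1)\Rightarrow(2)$: since $X$ has quotient singularities it is a $\mathbb{Q}$-homology manifold whose cohomology carries a pure Hodge structure with Hodge symmetry, and $H^q(X,\cO_X)$ is its $(0,q)$-piece. From $f_*\cO_X=\bigoplus_\chi L_\chi^{-1}$ as $G$-sheaves and finiteness of $f$ one reads off $H^q(X,\cO_X)=\bigoplus_\chi H^q(A,L_\chi^{-1})$, the $\chi$-summand being $H^q(X,\cO_X)^\chi$. If $X$ is a rational cohomology torus then $\ww^{2\dim X}H^1(X,\mathbb{Q})\isom H^{2\dim X}(X,\mathbb{Q})\isom\mathbb{Q}$ forces $h^1(X)=2\dim X$, hence $h^{0,q}(X)=\binom{\dim X}{q}=\dim H^q(A,\cO_A)$ for every $q$; comparing dimensions with the isotypic decomposition of $H^q(X,\cO_X)$ gives $H^q(A,L_\chi^{-1})=0$ for all $q$ and all non-trivial $\chi$, and Serre duality on $A$ (where $\omega_A\isom\cO_A$) yields $(2)$.

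For $(2)\Rightarrow(3)$: one first upgrades $(2)$ to $(1)$, equivalently shows that each $[\Gamma_\chi^X]$ with $\chi$ non-trivial annihilates all of $H^\bullet(X,\mathbb{C})$. This uses a description of the Hodge cohomology of the cover — an Esnault--Viehweg type formula expressing the $\chi$-isotypic part of $H^q(X,\widehat{\Omega}_X^p)$ (with $\widehat{\Omega}_X^p$ the sheaf of reflexive $p$-forms) through twisted coherent cohomology on $A$ built from $L_\chi^{-1}$, $\Omega_A^p$, and logarithmic data along the branch divisor — together with the triviality of $\Omega_A^p$ on the abelian variety $A$ and the vanishing $H^\bullet(A,L_\chi^{-1})=0$ from $(2)$ and Serre duality; it follows that $H^\bullet(X,\mathbb{C})^\chi=0$ for every non-trivial $\chi$. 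The remaining step is to deduce from this homological vanishing that the idempotent $[\Gamma_\chi^X]$ is already zero in $\mathrm{CH}^{\dim X}(X\times X)_{\mathbb{C}}$. The intended mechanism is a nilpotence (Kimura--O'Sullivan) argument: a homologically trivial idempotent in the endomorphism ring of a finite-dimensional Chow motive is nilpotent, hence zero; so it suffices that the direct summand of the Chow motive of $X$ corresponding to $\sum_{\chi\neq\mathrm{triv}}[\Gamma_\chi^X]$ be finite-dimensional. One reduces to $G=\mathbb{Z}/n$ with $\chi$ faithful (the $\chi$-part factors through the quotient cover $X/\ker\chi\to A$, which again has quotient singularities), and then exploits that $(2)$ is very restrictive on $L_\chi$: by Mumford's vanishing theory for line bundles on abelian varieties, $h^i(A,L_\chi)=0$ forces $L_\chi$ to be degenerate with non-trivial restriction to the connected component $B\subsetneq A$ of the kernel of the polarization morphism $\varphi_{L_\chi}$, so that the branch data of $X\to A$ is, up to translations, isogenies, and a finite étale twist along $B$, pulled back from the abelian quotient $A/B$; this is designed to realize the non-trivial isotypic part of $X$ through abelian-variety geometry over the lower-dimensional $A/B$ and hence as a finite-dimensional motive. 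In the extreme case $B=A$ the cover is étale, $X$ is itself an abelian variety, the deck transformations are translations by $n$-torsion points — which act trivially on $\mathrm{CH}^\bullet(X)_{\mathbb{Q}}$ — and $[\Gamma_\chi^X]=0$ outright.

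I expect this last point to be the genuine obstacle: a general cyclic cover of an abelian variety is not of abelian type and no nilpotence/finite-dimensionality property is otherwise available, so one must really convert the cohomological hypothesis $(2)$ into a geometric reduction of the non-trivial isotypic part of $X$ to abelian-variety-type covers, and carry this out while keeping track of the $G$-action — in particular through a $G$-equivariant resolution of the quotient singularities of $X$, on which the motivic formalism is cleanest.
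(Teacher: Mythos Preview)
Your arguments for $(3)\Rightarrow(1)$ and $(1)\Rightarrow(2)$ are correct and coincide with the paper's. The divergence is entirely in $(2)\Rightarrow(3)$, where your route has two genuine gaps and misses the mechanism the paper actually uses.

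\textbf{Gap in the Hodge step.} Your proposed proof of $(2)\Rightarrow(1)$ via an Esnault--Viehweg formula does not go through as written. The eigenspace $H^q(X,\widehat{\Omega}^p_X)^\chi$ is computed by something like $H^q\bigl(A,\Omega_A^p(\log D')\otimes L_\chi^{-1}\bigr)$ for an appropriate part $D'$ of the branch divisor. The triviality of $\Omega_A^p$ says nothing about $\Omega_A^p(\log D')$: the residue sequence feeds in cohomology supported on $D'$, and there is no reason for $H^\bullet(D',L_\chi^{-1}|_{D'})$ to vanish from the hypothesis $H^\bullet(A,L_\chi^{-1})=0$ alone. (There is also the issue of making the formula precise for a cover that is only assumed to have quotient singularities.) The paper never attempts $(2)\Rightarrow(1)$ directly; it goes straight from $(2)$ to $(3)$ in Chow.

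\textbf{Gap in the motivic step.} As you yourself flag, the Kimura--O'Sullivan nilpotence argument requires knowing that the relevant summand of the Chow motive of $X$ is finite-dimensional, and nothing in your sketch establishes this. Saying that the branch data is ``pulled back from $A/B$ up to an \'etale twist'' is exactly the right geometric picture, but it does not by itself place the non-trivial isotypic summand inside the category generated by abelian varieties; you would still have to control the motive of the lower-dimensional cyclic cover $Z\to A/B$, which is the same problem one dimension down. So what you have is the skeleton of an induction without the inductive step.

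\textbf{What the paper does instead.} The paper avoids both Esnault--Viehweg and Kimura entirely. The engine is the elementary observation that for a torsion translation $t$ on an abelian variety $A$ one has $[\Gamma_t^A]=[\Delta_A]$ in $\mathrm{CH}^{\dim A}(A\times A)_{\mathbb Q}$ (since $m^*$ is an isomorphism on rational Chow and kills the difference). With this in hand, the proof of $(2)\Rightarrow(3)$ in the cyclic case goes by induction on $\dim Y$: condition $(2)$ forces $L_\chi$ to be non-ample, hence $L_\chi=p^*H+Q$ with $p\colon A\to B=A/K$, $H$ ample on $B$, and $Q$ torsion with $Q|_K\neq\cO_K$; the branch divisor is pulled back from $B$, so one gets a factorization $Y\to Y'=Z\times_BA\to A$ with $Z\to B$ a cyclic cover of strictly smaller dimension satisfying the same hypothesis. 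The induction gives $\sum_h\iota^{-1}(h)[\Gamma_h^Z]=0$, and the torsion-translation lemma transports this to $Y'$. For the remaining characters (non-trivial on the kernel $H'$), Kawamata's splitting theorem produces an \'etale cover $\widetilde Y\simeq\widehat Z\times\widetilde K$ on which the $H'$-action on the $\widetilde K$ factor is by torsion translations; the same lemma then makes $\sum_{h\in H'}\chi^{-1}(h)[\Gamma_h^{\widetilde K}]$ vanish by character orthogonality, and one pushes down. No finite-dimensionality or nilpotence is invoked anywhere.

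In short: you correctly isolate the geometric reduction (degeneracy of $L_\chi$, the quotient $A\to A/B$, the \'etale extreme case), but you package it through Kimura's theory, which you cannot verify here. The paper uses the same geometry to perform a direct computation of the graph classes, with the torsion-translation lemma replacing the role you wanted nilpotence to play.
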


The implications (1) $\Rightarrow$ (2) and (3) $\Rightarrow$ (1) are straightforward. Indeed, if $X$ is a rational cohomology torus, then $f^*: H^\bullet(A, \mathbb{Q})\rightarrow H^\bullet(X, \mathbb{Q})$ is an isomorphism of $\mathbb{Q}$-Hodge structures. We have for all $i \in \mathbb{Z}$,
$$h^i(A, \cO_A) = h^{0, i}(A) = h^{0, i}(X) = h^i(X, \cO_X) = \sum_{\chi\in G^*}h^i(A, L_{\chi}^{-1}).$$
Hence $h^i(A, L_{\chi}^{-1})=0$ for all $i\in \mathbb{Z}$ and non-trivial $\chi \in G^*$. We also have $h^i(A, L_{\chi})=0$ by Serre duality.

If $[\Gamma_\chi^X] = 0 \in \mathrm{CH}^{\dim X}(X\times X)_{\mathbb{C}}$ for all non-trivial $\chi \in G^*$, then by applying $[\Gamma_\chi^X]$ to $H^\bullet(X, \mathbb{C})$, we find $H^\bullet(X, \mathbb{C}) = H^\bullet(X, \mathbb{C})^G$. Hence $f^* : H^\bullet(A, \mathbb{Q}) \to H^\bullet(X, \mathbb{Q})$ is an isomorphism and $X$ is a rational cohomology torus.

The proof of (2) $\Rightarrow$ (3) will occupy the next section.

\begin{rema}
Since $L_{\chi}$ is semi-ample (see Step 1 in the proof of Proposition \ref{triviality}), condition~(2) in Theorem \ref{main} is also equivalent to the condition that $h^0(A,L_{\chi})=0$ for all non-trivial $\chi\in G^*$. Hence, under the assumption of Theorem \ref{main}, we know that $X$ is a rational cohomology torus if and only if $p_g(X)=1$.
\end{rema}
 
\section{Proof of Theorem \ref{main}}

\begin{lemm}\label{isogeny}
Let $A$ be an abelian variety and let $t: A\rightarrow A$ be the translation by a torsion element. Then $[\Gamma_t^A]-[\Delta_A]=0\in \mathrm{CH}^{\dim A}(A\times A)_{\mathbb{Q}}$, where $\Gamma_t^A$ is the graph of $t$ and $\Delta_A$ is the diagonal.
\end{lemm}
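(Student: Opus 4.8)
The plan is to recognize $[\Gamma_t^A]-[\Delta_A]$ as the obstruction to a \emph{torsion translation} acting trivially on rational Chow groups, and then to invoke the classical fact that torsion translations do act trivially. Write $t=t_a$ with $a\in A$ of order $n$. Regarding $A\times A$ as an abelian variety, the graph of $t$ is the translate of the diagonal by the $n$-torsion point $(0,a)$, namely $\Gamma_t^A=t_{(0,a)}(\Delta_A)$, so
$$[\Gamma_t^A]-[\Delta_A]=(t_{(0,a)})_*[\Delta_A]-[\Delta_A]\in \mathrm{CH}^{\dim A}(A\times A)_{\mathbb{Q}}.$$
Thus the lemma becomes a special case of the statement that for an abelian variety $B$, a torsion point $b\in B$, and a subvariety $Z\subseteq B$, one has $(t_b)_*[Z]=[Z]$ in $\mathrm{CH}^\bullet(B)_{\mathbb{Q}}$, applied with $B=A\times A$, $Z=\Delta_A$, $b=(0,a)$.

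For this general statement I would argue directly. Let $n$ be the order of $b$ and let $[n]\colon B\to B$ denote multiplication by $n$. Since $[n]\circ t_b=[n]$ as morphisms $B\to B$ (because $n(v+b)=nv$), functoriality of proper pushforward gives $[n]_*\bigl((t_b)_*[Z]-[Z]\bigr)=0$. It then suffices to know that $[n]_*$ is injective on the relevant part of $\mathrm{CH}^\bullet(B)_{\mathbb{Q}}$, and here I invoke Beauville's eigenspace decomposition $\mathrm{CH}^p(B)_{\mathbb{Q}}=\bigoplus_{s=p-\dim B}^{p}\mathrm{CH}^p_{(s)}(B)_{\mathbb{Q}}$, on which $[n]^*$ acts by $n^{2p-s}$ and hence $[n]_*$ acts by $n^{2(\dim B-p)+s}$. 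Since $s\geq p-\dim B$, this exponent is at least $\dim B-p$, which is positive whenever $p<\dim B$; so $[n]_*$ is injective on $\mathrm{CH}^p(B)_{\mathbb{Q}}$ for $p<\dim B$. In our situation $[\Delta_A]$ lives in $\mathrm{CH}^{g}(A\times A)_{\mathbb{Q}}$ with $g=\dim A<2g=\dim(A\times A)$ (the case $g=0$ being trivial), so $(t_{(0,a)})_*[\Delta_A]-[\Delta_A]=0$ and the lemma follows.

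I expect no genuine obstacle: the real content — that translations by torsion points act as the identity on rational Chow groups of abelian varieties — is classical, going back to Bloch and Beauville, and can alternatively be deduced from the local nilpotence of $(t_b)^*-\mathrm{id}$ together with $(t_b)^n=\mathrm{id}$. The only point to watch is that the injectivity of $[n]_*$ is being applied in codimension strictly less than the dimension; equivalently, if one instead first applies the automorphism $(x,y)\mapsto(x,y-x)$ of $A\times A$, which carries $[\Gamma_t^A]-[\Delta_A]$ to the pullback of $[a]-[0]$ along the second projection, one is reduced to $[a]=[0]$ in $\mathrm{CH}_0(A)_{\mathbb{Q}}$, and there one must discard the degree, noting that $[a]-[0]$ lies in the summands $\mathrm{CH}^{g}_{(s)}(A)_{\mathbb{Q}}$ with $s\geq 1$ on which $[n]_*$ scales by the nonzero integer $n^{s}$. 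I would present the first formulation, citing Beauville for the decomposition.
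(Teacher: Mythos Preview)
Your proof is correct and follows essentially the same route as the paper: both arguments use that multiplication by the order $m$ of the torsion point kills $[\Gamma_t^A]-[\Delta_A]$ and that this multiplication is an isomorphism on $\mathrm{CH}^\bullet(A\times A)_{\mathbb{Q}}$ by Beauville. The paper states this in one line via $m^*$, while you route through the equivalent general fact that torsion translations act trivially on rational Chow groups, using $[n]_*$ and spelling out the eigenspace decomposition; the extra detail is fine but unnecessary, since Beauville already gives that $[n]^*$ (hence $[n]_*$) is an isomorphism outright.
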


\begin{proof}
Let $m\in \mathbb{Z}_{>0}$ be the order of $t$. Recall that $m^*: \mathrm{CH}^\bullet(A\times A)_{\mathbb{Q}}\rightarrow \mathrm{CH}^\bullet(A\times A)_{\mathbb{Q}}$ is an isomorphism (see \cite{B}). Since $m^*[\Gamma_t^A]=m^*[\Delta_A]\in  \mathrm{CH}^{\dim A}(A\times A)_{\mathbb{Q}}$, we conclude that $[\Gamma_t^A]-[\Delta_A]=0\in \mathrm{CH}^{\dim A}(A\times A)_{\mathbb{Q}}$.
\end{proof}
 
\begin{prop}\label{triviality}
Let $\rho: Y\rightarrow A$ be a cyclic cover from a variety with quotient singularities to an abelian variety with Galois group $H$. We write 
$$\rho_*\cO_Y=\bigoplus_{\chi\in H^*}L_{\chi}^{-1}.$$
If $h^i(A, L_{\chi})=0$ for all $i\in \mathbb{Z}$ and non-trivial $\chi \in H^*$, then for all non-trivial $\chi \in H^*$,
$$\sum_{h\in H}\chi^{-1}(h)[\Gamma_h^Y]=0\in  \mathrm{CH}^{\dim Y}(Y\times Y)_{\mathbb{C}}.$$
\end{prop}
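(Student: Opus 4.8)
The plan is to reduce the vanishing of the character-weighted combination of graphs to an Enriques–Severi–Zariski–type statement about a line bundle on $A\times A$, then leverage Lemma~\ref{isogeny} together with the cohomological hypothesis $h^i(A,L_\chi)=0$. Let $H=\mathbb{Z}/m\mathbb{Z}$ with generator $h_0$, and fix a primitive $m$-th root of unity $\zeta$, so that the nontrivial characters are $\chi_j:h_0\mapsto\zeta^j$, $1\le j\le m-1$. The cyclic cover $\rho:Y\to A$ is built from a line bundle $L=L_{\chi_1}$ on $A$ together with a section of $L^{\otimes m}$ defining the branch divisor; the class $[\Gamma_{h_0^k}^Y]$ is the graph of the deck transformation acting on fibres by $\zeta^k$. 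I would first push everything forward to $A\times A$ via $\rho\times\rho$: the correspondence $\sum_{k}\chi_j^{-1}(h_0^k)[\Gamma_{h_0^k}^Y]$ acts on $\mathrm{CH}^\bullet(Y)_{\mathbb C}$ as $|H|$ times the projector onto $\mathrm{CH}^\bullet(Y)_{\mathbb C}^{\chi_j}$, and by the projection formula its vanishing in $\mathrm{CH}^{\dim Y}(Y\times Y)_{\mathbb C}$ is equivalent to the vanishing of the induced map $\mathrm{CH}^\bullet(Y)_{\mathbb C}^{\chi_j}\to \mathrm{CH}^\bullet(Y)_{\mathbb C}^{\chi_j}$, i.e. to the statement that $\mathrm{CH}^\bullet(Y)_{\mathbb C}^{\chi_j}=0$ for every nontrivial $j$. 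So the proposition is really the assertion: the cohomological hypothesis forces every nontrivial isotypic piece of the rational Chow ring of $Y$ to vanish.

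The main step, then, is to produce enough cycles to kill $\mathrm{CH}^\bullet(Y)_{\mathbb C}^{\chi_j}$. Here I would use the explicit geometry of the cyclic cover. Inside the total space of $L$, $Y$ is cut out by $t^m=s$, and the eigen-piece $\mathrm{CH}^\bullet(Y)^{\chi_j}$ is generated by classes of the form $t^j\cdot \rho^*(\beta)$ for $\beta\in\mathrm{CH}^\bullet(A)$, where $t^j$ is interpreted via the tautological section. More usefully, $\mathrm{CH}^\bullet(Y)_{\mathbb C}^{\chi_j}$ is a module over $\mathrm{CH}^\bullet(A)_{\mathbb C}=\mathrm{CH}^\bullet(Y)_{\mathbb C}^{\mathrm{triv}}$ and is generated by a single class $\eta_j$ (the ``$j$-th power of $t$''); concretely $\rho_*(\eta_j\cdot\rho^*\beta)=0$ and $\eta_j\cdot\eta_{m-j}$ is (up to $\rho^*$ of the branch class) $\rho^*$ of something explicit. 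So to prove $\mathrm{CH}^\bullet(Y)_{\mathbb C}^{\chi_j}=0$ it suffices to prove $\eta_j=0$. Now I invoke the hypothesis: $h^i(A,L_{\chi_j})=0$ for all $i$ means $L_{\chi_j}$ is in the kernel of the Fourier–Mukai transform in a strong sense — in particular (using the remark that $L_\chi$ is semiample, established in Step~1 of the proof, which I may assume) $h^0(A,L_{\chi_j})=0$, so $L_{\chi_j}$ is semiample with no sections, hence numerically trivial; being a semiample numerically trivial line bundle on an abelian variety, $L_{\chi_j}$ is torsion in $\mathrm{Pic}^0(A)$. This is the bridge to Lemma~\ref{isogeny}.

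Once $L_{\chi_j}$ is torsion, the cyclic cover $Y\to A$ becomes, after an étale base change by multiplication by a suitable integer, a trivial (disconnected) cover, or more precisely an isogeny-translate situation: the deck transformation $h_0$ differs from a translation by a torsion point on a suitable abelian variety mapping to $A$, and Lemma~\ref{isogeny} says such a translation is rationally equivalent to the identity as a correspondence. I would make this precise by forming the fibre product and using that $m^*$ (multiplication by the order) is an isomorphism on the relevant rational Chow group — exactly the mechanism of Lemma~\ref{isogeny} — to deduce $[\Gamma_{h_0^k}^Y]=[\Delta_Y]$ in $\mathrm{CH}^{\dim Y}(Y\times Y)_{\mathbb Q}$ for every $k$. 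Substituting into $\sum_k\chi_j^{-1}(h_0^k)[\Gamma_{h_0^k}^Y]$ gives $\bigl(\sum_k\chi_j^{-1}(h_0^k)\bigr)[\Delta_Y]=0$ since $\chi_j$ is nontrivial, which is the desired vanishing.

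The hard part, and where the cohomological hypothesis really earns its keep, is the implication ``$h^i(A,L_{\chi_j})=0$ for all $i$ $\Rightarrow$ $L_{\chi_j}$ torsion'' together with the compatibility of the cyclic-cover structure with that torsion (i.e. showing that not just $L_{\chi_j}$ but the whole cover is ``torsion-trivialized''): one needs that the section of $L_{\chi_j}^{\otimes m}$ cutting out $Y$ also becomes trivial after the base change, equivalently that the branch divisor is itself an $m$-torsion translate of a fixed divisor; this should follow because $L_{\chi_1},\dots,L_{\chi_{m-1}}$ are all torsion and the relations among the $L_{\chi}$ in a cyclic cover force the branch data to be torsion as well. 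Handling the quotient singularities of $Y$ — ensuring the intersection-theoretic formalism (projectors, projection formula, $m^*$ an isomorphism) applies verbatim — is a technical point but not a genuine obstacle, since the $\mathbb Q$-coefficient intersection theory of \cite{V} is set up precisely for this.
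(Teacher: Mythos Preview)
Your approach has a genuine gap. The central claim --- that $h^i(A,L_{\chi_j})=0$ for all $i$ together with semiampleness forces $L_{\chi_j}$ to be numerically trivial, hence torsion in $\Pic^0(A)$ --- is false. The implication ``semiample with $h^0=0$ $\Rightarrow$ numerically trivial'' already fails on a product of elliptic curves: take $A=E\times E'$ and $L=p_1^*\cO_E(P)\otimes p_2^*Q$ with $Q\in\Pic^0(E')$ nontrivial torsion of order $n$; then $L^n$ is globally generated so $L$ is semiample, $h^0(A,L)=h^0(E,\cO(P))\cdot h^0(E',Q)=0$, yet $c_1(L)=p_1^*[P]\neq 0$. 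This is exactly the shape of the general situation: one has $L_\chi=p^*H+Q$ with $H$ ample on a nontrivial quotient $B=A/K$ and $Q$ torsion, so the branch locus is genuinely present and the cover cannot be trivialized by an isogeny base change. Your argument, as written, only treats the \'etale case $B=0$, and the subsequent claim that ``the branch data are torsion as well'' has no basis.

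The paper's proof does not attempt to trivialize the whole cover. Instead it runs by induction on $\dim Y$: the branch divisor is pulled back from the lower-dimensional abelian variety $B$, so the Stein factorization of $Y\to B$ yields a cyclic cover $Z\to B$ with $\dim Z<\dim Y$ satisfying the same cohomological hypothesis, while the fibres of $Y\to Z$ are abelian varieties on which a subgroup $H'\subset H$ acts by torsion translation. Lemma~\ref{isogeny} is applied to those abelian-variety fibres (and to $A\times_B A$ when passing from $Z$ to $Y'=Z\times_B A$), not to $Y$ globally, and the induction hypothesis handles $Z$. Your instinct to use Lemma~\ref{isogeny} is right, but it must be applied fibrewise after this structural decomposition.

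A secondary issue: the reduction ``vanishing of the projector as a cycle in $\mathrm{CH}^{\dim Y}(Y\times Y)_{\mathbb C}$ is equivalent to vanishing of its action on $\mathrm{CH}^\bullet(Y)_{\mathbb C}$'' is not valid --- a correspondence can act trivially on Chow groups without being zero as a cycle --- so even a successful proof that $\mathrm{CH}^\bullet(Y)_{\mathbb C}^{\chi_j}=0$ would not establish the proposition as stated.
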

\begin{proof}
We proceed by induction on the dimension of $Y$. The statement in dimension $0$ is trivial. We then assume that Proposition \ref{triviality} holds in dimensions $<\dim Y$.

\vspace{.5\baselineskip plus .2\baselineskip minus .2\baselineskip}
\noindent {\bf Step 1.} {\em Reduction to lower dimensions.}

\vspace{.5\baselineskip plus .2\baselineskip minus .2\baselineskip}
Let $\chi$ be a generator of $H^*$. Since $h^i(A, L_{\chi})=0$ for all $i\in \mathbb{Z}$, the line bundle $L_{\chi}$ is not ample. Let $K$ denote the neutral component of the morphism $\varphi_{L_\chi}: A\rightarrow \Pic^0(A)$ induced by $L_\chi$, and let
$p : A\rightarrow B$ be the quotient by $K$. Then we have
$$L_\chi=p^*H + Q,$$
where $H$ is an ample line bundle on $B$ and $Q$ is a torsion line bundle such that $Q|_K\neq \cO_K$ (see for instance \cite[Section~3.3]{BL}). As $\rho$ is a cyclic cover and $\chi$ is a generator of~$H^*$, for each $k\in \mathbb{Z}$, there exists an effective divisor $D_k$ whose components are irreducible components of the branched divisor $D$ such that
$$kL_{\chi}=L_{\chi^k}+D_{k}.$$
Moreover, for $m=|H|$, we know that $mL_\chi=D_m$ and that $D_m-D$ is effective (see \cite{P}). In particular, we have $D=p^*D_B$ for some ample divisor $D_B$ on $B$.
 
Consider the Stein factorization of the natural morphism $\varphi = p \circ \rho: Y\rightarrow B$,
$$\varphi: Y\xrightarrow{\varphi'} Z\xrightarrow{\varphi''} B.$$  
A general fiber of $\varphi'$ is an \'etale cover of $K$, which is an abelian variety that we denote by~$\wK$. Hence $\varphi'$ is the Iitaka fibration of $Y$ in the sense of \cite[Theorem 13]{kaw}.
 
Let $Y':=Z\times_BA$. Then we have a factorization
$$\rho: Y\xrightarrow{\rho'} Y'\xrightarrow{\rho''} A.$$
Both $\rho'$ and $\rho''$ are cyclic covers with respective Galois groups $H'\subset H$ and $H'':=H/H'$. Hence the morphism $\varphi'': Z\rightarrow B$ is also an $H''$-cover. Moreover, we write 
$$\varphi''_*\cO_Z=\bigoplus_{\iota\in H''^*}L_{\iota, Z}^{-1}.$$
Then we have $p^*L_{\iota, Z}=L_{\chi_\iota}$, where $\chi_\iota$ is the induced character of $H$. Hence $H^i(B, L_{\iota, Z})=0$ for all $i\in \mathbb{Z}$ and non-trivial $\iota\in H''^*$.

Further, we show that $Y'$ and $Z$ have quotient singularities. Recall from \cite[Proposition~2.8]{V} that $Y$ is the coarse moduli space of a smooth Deligne-Mumford stack $\mathcal{Y}$ (called the canonical stack). The morphism $\mathcal{Y} \to Y$ is an isomorphism away from a locus of codimension $\geq 2$. Since $Y$ is normal, the action of~$H'$ on $Y$ induces an action on $\mathcal{Y}$. The quotient $Y' = Y/H'$ is the coarse moduli space of the smooth Deligne-Mumford stack $[\mathcal{Y}/H']$, and hence has quotient singularities. 

We take a quotient $A\rightarrow K$ such that $A\rightarrow B\times K$ is an isogeny. Each connected component of a general fiber of  the induced morphism $Y'\rightarrow K$ is an \'etale cover of $Z$. Hence $Z$ also has quotient singularities.

Since $\dim Z < \dim Y$, the induction hypothesis implies that for all non-trivial $\iota\in H''^*$,
$$\sum_{h\in H''}\iota^{-1}(h)[\Gamma_h^Z]=0\in \mathrm{CH}^{\dim Z}(Z\times Z)_{\mathbb{C}}.$$

\noindent {\bf Step 2.} {\em Proof that $\sum_{h\in H''}\iota^{-1}(h)[\Gamma_h^{Y'}]=0\in \mathrm{CH}^{\dim Y'}(Y'\times Y')_{\mathbb{C}}$ for all non-trivial $\iota\in H''^*$.}

\vspace{.5\baselineskip plus .2\baselineskip minus .2\baselineskip}
Consider the quotient $A\rightarrow K$ such that $A\rightarrow B\times K$ is an isogeny with Galois group $L$. Then the induced morphism $\pi: Y'=Z\times_BA\rightarrow Z\times K$ is also an \'etale cover with Galois group $L$.

Note that for all $h\in H''$,
$$(\pi\times \pi)^*[\Gamma^Z_h\times \Delta_K]=\sum_{l\in L}[\Gamma^Z_h\times_{\Delta_B}\Gamma^A_l ]\in \mathrm{CH}^{\dim Y'}(Y'\times Y')_{\mathbb{Q}}.$$
Consider for each $l\in L$ the embeddings $\Gamma^Z_h\times_{\Delta_B}\Gamma^A_l \subset \Gamma^{Z}_h\times_{\Delta_B}(A\times_BA) \subset Y'\times Y'$. By Lemma~\ref{isogeny}, we have $[\Gamma^A_l]=[\Delta_A]\in \mathrm{CH}_{\dim Y'}(A\times_BA)_{\mathbb{Q}}$. Hence $$[\Gamma^Z_h\times_{\Delta_B}\Gamma^A_l]=[\Gamma^Z_h\times_{\Delta_B}\Delta_A]\in \mathrm{CH}_{\dim Y'}(\Gamma^{Z}_h\times_{\Delta_B}(A\times_BA))_{\mathbb{Q}}.$$
In particular, $$[\Gamma^Z_h\times_{\Delta_B}\Gamma^A_l]=[\Gamma^Z_h\times_{\Delta_B}\Delta_A]\in \mathrm{CH}^{\dim Y'}(Y'\times Y')_{\mathbb{Q}}.$$
 
Then we find
\begin{align*}
(\pi\times \pi)^*\big(\sum_{h\in H''}\iota^{-1}(h)[\Gamma^Z_h\times\Delta_K]\big) &= \sum_{h\in H''}\iota^{-1}(h)\big(\sum_{l\in L} [\Gamma^Z_h\times_{\Delta_B}\Gamma^A_l]\big)\\
&= |L|\sum_{h\in H''}\iota^{-1}(h)[\Gamma^Z_h\times_{\Delta_B}\Delta_A]\\
&= |L|\sum_{h\in H''}\iota^{-1}(h)[\Gamma^{Y'}_h] \in \mathrm{CH}^{\dim Y'}(Y'\times Y')_{\mathbb{C}}.
\end{align*}
As $\sum_{h\in H''}\iota^{-1}(h)[\Gamma^Z_h\times\Delta_K]=0\in \mathrm{CH}^{\dim Y'}(Z \times Z \times K \times K)_{\mathbb{C}}$, we also have
$$\sum_{h\in H''}\iota^{-1}(h)[\Gamma^{Y'}_h] =0\in \mathrm{CH}^{\dim Y'}(Y'\times Y')_{\mathbb{C}}.$$

\noindent {\bf Step 3.} {\em  Proof that $\sum_{h\in H'}\chi^{-1}(h)[\Gamma_h^Y]=0\in \mathrm{CH}^{\dim Y}(Y\times Y)_{\mathbb{C}}$ for all non-trivial $\chi\in H'^*$.}

\vspace{.5\baselineskip plus .2\baselineskip minus .2\baselineskip}
By Kawamata's theorem \cite[Theorem 13]{kaw}, there exists an \'etale cover $\mu: \wY\rightarrow Y$ with Galois group $J$ induced by an \'etale cover of $A$, such that the following holds: consider the Stein factorization of the natural morphism $\nu = \varphi' \circ \mu: \wY\rightarrow Z$,
$$\nu: \wY\xrightarrow{\nu'} \PZ\xrightarrow{\nu''} Z.$$
Then we have $\wY\simeq \PZ\times \wK$ with $\nu'$ the projection, $J$ acts on $\PZ$ faithfully and on $\wK$ faithfully by translation, and $Y\simeq (\PZ\times \wK)/J$. We put everything into a commutative diagram:
$$\xymatrix{
\wY\ar[d]^{\nu'} \ar[r]^{\mu} &Y \ar@/^2pc/[rr]^{\rho}\ar[r]^{\rho'}\ar[d]^{\varphi'} & Y'\ar[r]^{\rho''}\ar[dl] & A\ar[dl]^p\\
\PZ\ar[r]^{\nu''}& Z\ar[r]^{\varphi''} & B.}$$

Since $\mu$ is induced by an \'etale cover of $A$, we know that $\rho'\circ\mu: \wY\rightarrow Y'$ is still an abelian cover. It is clear that the Galois group of $\rho\circ\mu$ is $J\times H$ and hence the Galois group of $\rho'\circ \mu$ is $J\times H'$. Let $q: J\times H' \rightarrow H'$ be the projection. 

Consider the morphism $\mu\times\mu: \wY\times \wY\rightarrow Y\times Y$. Note that for all $h\in H'$,
$$(\mu\times\mu)^*[\Gamma_h^Y]=\sum_{g\in q^{-1}(h)}[\Gamma_g^{\wY}] \in \mathrm{CH}^{\dim \wY}(\wY \times \wY)_{\mathbb{Q}}.$$
Hence for all non-trivial $\chi\in H'^*$,
$$(\mu\times\mu)^*(\sum_{h\in H'}\chi^{-1}(h)[\Gamma_h^Y]) = \sum_{(g, h)\in J\times H'} \chi^{-1}(h)[\Gamma_{(g, h)}^{\wY}] \in \mathrm{CH}^{\dim Y}(\wY\times\wY)_{\mathbb{C}}.$$

Under the isomorphism $\wY \simeq \PZ\times \wK$, we have 
$$[\Gamma_{(g, h)}^{\wY}]=[\Gamma^{\PZ}_g\times \Gamma_{(g, h)}^{\wK}]\in \mathrm{CH}^{\dim Y}(\PZ\times \PZ\times \wK \times \wK)_{\mathbb{Q}}.$$
Then we find 
\begin{align*}
(\mu\times\mu)^*(\sum_{h\in H'}\chi^{-1}(h)[\Gamma_h^Y])
&= \sum_{(g,h)\in J\times H'} \chi^{-1}(h)[\Gamma^{\PZ}_g\times \Gamma_{(g, h)}^{\wK}]\\
&= \sum_{g\in J} ([\Gamma^{\PZ}_g]\times (\sum_{h\in H'}\chi^{-1}(h)[\Gamma_{(g, h)}^{\wK}]))\in \mathrm{CH}^{\dim Y}(\PZ\times \PZ\times \wK\times \wK)_{\mathbb{C}}.
\end{align*}

By Lemma \ref{isogeny}, we have $[\Gamma_{(g, h)}^{\wK}]=[\Delta_{\wK}]\in \mathrm{CH}^{\dim \wK}(\wK \times \wK)_{\mathbb{Q}}$. Hence
$$\sum_{h\in H'}\chi^{-1}(h)[\Gamma_{(g, h)}^{\wK}]=0\in \mathrm{CH}^{\dim \wK}(\wK \times \wK)_{\mathbb{C}},$$
and thus $(\mu\times\mu)^*(\sum_{h\in H'}\chi^{-1}(h)[\Gamma_h^Y])=0\in \mathrm{CH}^{\dim Y}(\wY\times\wY)_{\mathbb{C}}$. By the projection formula,
$$\sum_{h\in H'}\chi^{-1}(h)[\Gamma_h^Y]=0\in \mathrm{CH}^{\dim Y}(Y\times Y)_{\mathbb{C}}.$$

\noindent {\bf Step 4.} {\em Conclusion.}

\vspace{.5\baselineskip plus .2\baselineskip minus .2\baselineskip}
Let $\chi \in H^*$ be a non-trivial character. If the restriction of $\chi$ to $H'$ is trivial, then $\chi$ is the pull-back of a non-trivial character $\iota$ of $H''$. As before,
$$\sum_{h\in H}\chi^{-1}(h)[\Gamma^Y_h]=(\rho'\times\rho')^*(\sum_{h\in H''}\iota^{-1}(h)[\Gamma^{Y'}_h])\in \mathrm{CH}^{\dim Y}(Y\times Y)_{\mathbb{C}}.$$
By Step 2, we see that $\sum_{h \in H}\chi^{-1}(h)[\Gamma^Y_h] = 0 \in \mathrm{CH}^{\dim Y}(Y\times Y)_{\mathbb{C}}$.

If the restriction of $\chi$ to $H'$ is non-trivial, we take a subset $V$ of $H$ consisting of representatives of each coset of $H'$.
Then we have 
$$\sum_{h\in H}\chi^{-1}(h)[\Gamma^Y_h] = (\sum_{h\in H'}\chi^{-1}(h)[\Gamma^Y_{h}])\circ(\sum_{g\in V}\chi^{-1}(g)[\Gamma^Y_{g}])\in \mathrm{CH}^{\dim Y}(Y\times Y)_{\mathbb{C}}.$$
By Step 3, we also know that $\sum_{h \in H}\chi^{-1}(h)[\Gamma^Y_h] = 0 \in \mathrm{CH}^{\dim Y}(Y\times Y)_{\mathbb{C}}$.
\end{proof}

We are now ready to prove Theorem \ref{main}. Let $\chi \in G^*$ be a non-trivial character. Consider
$$G' := \mathrm{Ker}(\chi: G \to \mathbb{C}^*) \subset G.$$
We take $X' := X/G'$ which has quotient singularities (see Step 1 in the proof of Proposition~\ref{triviality}), and we have a factorization
$$f : X \xrightarrow{f'} X' \xrightarrow{f''} A.$$
Then $f''$ is a cyclic cover with Galois group $G'' := G / G' \subset \mathbb{C}^*$.  Moreover, the character $\chi$ is the pull-back of a non-trivial character $\iota$ of $G''$. It is straightforward to verify that the assumption of Proposition~\ref{triviality} is satisfied by $X'$ and $G''$. Hence
$$\sum_{g \in G''} \iota^{-1}(g)[\Gamma_g^{X'}] = 0 \in \mathrm{CH}^{\dim X'}(X' \times X')_{\mathbb{C}}.$$
As before,
$$\sum_{g \in G} \chi^{-1}(g)[\Gamma_g^X] = (f' \times f')^*(\sum_{g \in G''} \iota^{-1}(g)[\Gamma_g^{X'}]) = 0 \in \mathrm{CH}^{\dim X}(X \times X)_{\mathbb{C}},$$
and thus
\[\pushQED{\qed}[\Gamma_{\chi}^X] = \frac{1}{|G|}\sum_{g \in G} \chi^{-1}(g)[\Gamma_g^X] = 0 \in \mathrm{CH}^{\dim X}(X \times X)_{\mathbb{C}}.\qedhere\popQED\]

\section*{Acknowledgements}

We thank Barbara Fantechi, Lie Fu, Johannes Schmitt, and Zhiyu Tian for helpful discussions.

\end{document}